\newtheorem{theorem}{Theorem}[section]
\newtheorem{lemma}[theorem]{Lemma}
\newtheorem{conjecture}[theorem]{Conjecture}
\newtheorem{problem}[theorem]{Problem}
\newcommand{\F}{\mathcal{F}}
\newcommand{\R}{\mathbb R}
\newcommand{\N}{\mathbb N}
\newcommand{\Ha}{\mathcal{H}}
\newcommand{\Le}{\mathcal{L}}
\newcommand{\bx}{\mathbf{x}}
\newcommand{\by}{\mathbf{y}}
\newcommand{\inter}{\operatorname{int}}
\title{On $k$-antichains in the unit $n$-cube}
\author{
Christos Pelekis
\thanks{
Institute of Mathematics, Czech Academy of Sciences, \v{Z}itn\'a 25, 
115 67,   Praha 1, 
Czech Republic.  
Research supported by the GA\v{C}R project 18-01472Y and RVO: 67985840. 
E-mail: pelekis.chr@gmail.com
}
\and
V\'aclav Vlas\'ak 
\thanks{
Faculty of Mathematics and Physics, Charles University, Sokolovsk\'a 83, 18675 Praha 8, Czech Republic. E-mail:  vlasakvv@gmail.com
} 
}
\begin{document}
	\maketitle
	
\begin{abstract}
A \emph{chain} in the unit $n$-cube is a set $C\subset [0,1]^n$ such that  for every 
$\mathbf{x}=(x_1,\ldots,x_n)$ and $\mathbf{y}=(y_1,\ldots,y_n)$ in  $C$ we either have  
$x_i\le y_i$ for all $i\in [n]$, or $x_i\ge y_i$ for all $i\in [n]$. We consider 
subsets, $A$, of the unit $n$-cube $[0,1]^n$ that satisfy 
\[ 
\text{card}(A \cap C) \le k, \, \text{ for all chains } \, C \subset [0,1]^n \, , 
\]
where $k$ is a fixed positive integer. 
We refer to such a set $A$ as a $k$-antichain. We show that the $(n-1)$-dimensional Hausdorff measure of a $k$-antichain in $[0,1]^n$ is at most $kn$ and that the bound is asymptotically sharp. 
Moreover, we conjecture that there exist $k$-antichains in $[0,1]^n$ whose 
$(n-1)$-dimensional Hausdorff measure equals $kn$ and we verify the validity of this conjecture when $n=2$.  
\end{abstract}

\noindent \emph{Keywords and phrases}: $k$-antichains, Hausdorff measure, singular function

\noindent \emph{Mathematics Subject Classification (2010)}: 05D05; 28A78;  05C35; 26A30

\section{Prologue, related work and main results}

Let $[n]$ denote the set of positive integers $\{1,\ldots,n\}$, and $2^{[n]}$ denote the collection  of all subsets of $[n]$.  
Given two points $\bx=(x_1,\ldots, x_n)$ and $\by=(y_1,\ldots,y_n)$ in $\R^n$, we write $\bx \le \by$ if $x_i\le y_i$, for all $i\in [n]$. Given a subset $S\subset \R^n$, we say that 
a set $C\subset S$ is a {\em chain in $S$} if for all $\bx,\by\in C$ it either holds $\bx\le \by$ or $\by \le \bx$. 
Given a non-negative real number $s$, we denote by $\Ha^s(\cdot)$ the $s$-dimensional Hausdorff outer measure (see~\cite[p.~81 and p.~1--2]{Evans_Gariepy}). Notice that 
$\Ha^0(\cdot)$ is counting measure. 
Finally, given a positive integer $k$ and  a set $S\subset \R^n$,  
a {\em $k$-antichain in $S$} is a set $A\subset S$ 
such that $\Ha^0(A\cap C) \le k$, for all chains $C\subset S$. An $1$-antichain is simply referred to as an {\em antichain}.   

This work is motivated by a particular result from {\em extremal set theory}. Extremal set theory  (see \cite{Anderson, Engeltwo}) is a rapidly growing branch of combinatorics  which is  concerned with the problem of obtaining sharp  estimates on the size of a collection $\F\subset 2^{[n]}$, subject to constraints that are expressed in terms of union, intersection or inclusion. A particular line of research 
is driven by the idea that several results from extremal combinatorics  
have continuous counterparts. This is an idea that goes back to the 70's (see~\cite{Nash-Williams}) and,  since its conception, has resulted in reporting several analogues of results from extremal combinatorics both  in a ``measure-theoretic context"  (see, for example, \cite{DMP, engel, EMP, EMPR, katonaone, katonatwo, klainrota, MPV}) as well as in a ``vector space context" (see, for example, \cite{blokhuis, FranklWilson, katchalski})
In this note we report yet another measure-theoretic analogue of a result from extremal combinatorics. 

Before being more precise, let us remark that one can associate a binary vector of length $n$ to every $F\subset [n]$: simply put $1$ in the $i$-th coordinate if $i\in F$, and $0$ otherwise. Notice that this correspondence is bijective, and one may choose to not distinguish between subsets of $[n]$ and elements of $\{0,1\}^n$. In other words, any statement regarding collections $\F\subset 2^{[n]}$ can be turned to a statement regarding subsets $F\subset \{0,1\}^n$, and vice versa.

Perhaps the most fundamental result in extremal set theory is due to Sperner~\cite{Sperner}. 
It provides a sharp upper bound on the cardinality 
of an antichain in $\{0,1\}^n$. Sperner's theorem is a well-known and celebrated result that has been generalised in a plethora of ways (see~\cite{Engeltwo} for a textbook devoted to the topic). A particular extension of Sperner's theorem is due to Paul Erd\H{o}s, and reads as follows. 

\begin{theorem}[Erd\H{o}s~\cite{Erdos}]
\label{erdos_thm}
Fix a positive integer $k\in [n]$. If $A$ is a $k$-antichain in $\{0,1\}^n$, then 
\[
\Ha^0(A)\le \sum_{i=1}^{k} \binom{n}{\lfloor \frac{n-k}{2}\rfloor +i}\, .
\]
\end{theorem}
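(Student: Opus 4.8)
The plan is to derive the bound from a \emph{symmetric chain decomposition} of the Boolean lattice on $\{0,1\}^n$, ordered by the relation $\le$. Recall the classical theorem of de Bruijn, Tengbergen and Kruyswijk: $\{0,1\}^n$ can be partitioned into chains $C_1,\dots,C_N$ such that a chain which starts at level $r$ (meaning that its $\le$-smallest element has exactly $r$ coordinates equal to $1$) ends at level $n-r$, and hence consists of exactly $n-2r+1$ elements, one from each of the levels $r,r+1,\dots,n-r$; moreover the number of chains starting at level $r$ is $\binom{n}{r}-\binom{n}{r-1}$ for every $0\le r\le \lfloor n/2\rfloor$. Any such decomposition will do.

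The first step is immediate: each $C_j$ is a chain in $\{0,1\}^n$, so the $k$-antichain hypothesis gives $\Ha^0(A\cap C_j)\le \min\{k,\Ha^0(C_j)\}$, and since the $C_j$ partition $\{0,1\}^n$,
\[
\Ha^0(A)=\sum_{j=1}^{N}\Ha^0(A\cap C_j)\le \sum_{j=1}^{N}\min\{k,\Ha^0(C_j)\}.
\]
It therefore remains to identify the right-hand side with $\sum_{i=1}^{k}\binom{n}{\lfloor\frac{n-k}{2}\rfloor+i}$.

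For this, set $m=\lfloor\frac{n-k}{2}\rfloor$ and let $L\subset\{0,1\}^n$ be the set of binary vectors having between $m+1$ and $m+k$ coordinates equal to $1$, so that $\Ha^0(L)=\sum_{i=1}^{k}\binom{n}{m+i}$. The key claim is that every symmetric chain $C_j$ meets $L$ in exactly $\min\{k,\Ha^0(C_j)\}$ points. This splits into two cases according to the starting level $r$ of $C_j$: if $n-2r+1\ge k$ (a ``long'' chain), one checks that $r\le m+1$ and $n-r\ge m+k$, so $C_j$ runs through all $k$ consecutive middle levels $m+1,\dots,m+k$ and thus contributes exactly $k$ to $L$; if $n-2r+1<k$ (a ``short'' chain), one checks the reverse inequalities $r\ge m+1$ and $n-r\le m+k$, so $C_j$ lies entirely inside those levels and contributes its full length $n-2r+1$. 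Summing this claim over $j$, and using once more that $C_1,\dots,C_N$ partition $\{0,1\}^n$, yields $\sum_{j}\min\{k,\Ha^0(C_j)\}=\Ha^0(L)=\sum_{i=1}^{k}\binom{n}{m+i}$, which together with the displayed inequality proves the theorem.

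The only real work is the elementary case analysis verifying the two pairs of inequalities above; it must be carried out with some care because the precise choice $m=\lfloor\frac{n-k}{2}\rfloor$ is exactly what forces ``long'' chains to straddle all $k$ middle levels while ``short'' chains fit inside them, and the floor has to be tracked separately according to the parity of $n-k$. I would also remark that the same argument gives sharpness at once: every chain in $\{0,1\}^n$ meets each level in at most one point, so $L$ itself is a $k$-antichain, and it attains the bound.
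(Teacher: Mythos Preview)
Your argument via the de~Bruijn--Tengbergen--Kruyswijk symmetric chain decomposition is correct and is in fact the standard modern proof of this result; the case analysis on long versus short chains goes through exactly as you describe, with the choice $m=\lfloor(n-k)/2\rfloor$ being precisely what makes the level band $\{m+1,\dots,m+k\}$ absorb every short chain and be straddled by every long one.

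There is, however, nothing to compare against: the paper does not prove Theorem~\ref{erdos_thm} at all. It is quoted as a classical result of Erd\H{o}s (with a reference to the 1945 paper) and serves only as motivation for the continuous analogues that follow. So your write-up is not an alternative to the paper's proof but rather a self-contained supplement filling in a result the authors take for granted.
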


Notice that the bound provided by Theorem~\ref{erdos_thm} is sharp and is attained by the set 
\[
A = \bigcup_{i=1}^k \left\{\bx =(x_1,\ldots,x_n) \in \{0,1\}^n: \sum_{i=1}^{n}x_i = \lfloor \frac{n-k}{2}\rfloor +i \right\}\, .
\] 
In other words, Erd\H{o}s' result provides a sharp upper bound on the size of a $k$-antichain 
in the binary $n$-cube $\{0,1\}^n$. 
In this article we investigate a continuous analogue of Theorem~\ref{erdos_thm}. 
There are several ways to consider Theorem~\ref{erdos_thm} in a continuous setting (see~\cite{MPV} for an alternative direction), but the main idea is to examine what happens when one replaces the binary $n$-cube $\{0,1\}^n$ with the unit $n$-cube $[0,1]^n$ in Theorem~\ref{erdos_thm}.
What is the maximum ``size" of a $k$-antichain in the unit $n$-cube $[0,1]^n$?
Since we are dealing with subsets of $[0,1]^n$ and we have to choose an adequate notion of ``size". A first choice could be the $n$-dimensional Lebesgue measure, denoted $\Le^n(\cdot)$. 
However, it  is not difficult to see, using Lebesgue's density theorem, that the $\Le^n$-measure of a $k$-antichain equals zero. Given this fact, it is therefore natural to ask for sharp upper bounds on the Hausdorff dimension and the corresponding Hausdorff measure of a $k$-antichain in the unit $n$-cube. In the case of antichains this has been considered in~\cite{EMPR}, where the following continuous analogue of Sperner's theorem has been reported.  

\begin{theorem}[Engel et al.~\cite{EMPR}]
\label{empr}
If $A$ is an antichain in $[0,1]^n$, then 
\[
\Ha^{n-1}(A)\le n \, .
\]
\end{theorem}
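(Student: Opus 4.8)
The plan is to exhibit an antichain $A\subseteq[0,1]^n$ as a subset of the graph of a monotone function of $n-1$ variables, and then to estimate the $(n-1)$-dimensional Hausdorff measure of that graph. First, observe that the projection $\pi\colon(x_1,\dots,x_n)\mapsto(x_1,\dots,x_{n-1})$ is injective on $A$: two points of $A$ with the same first $n-1$ coordinates are comparable (in the last coordinate, hence everywhere), so they coincide. Writing $S:=\pi(A)\subseteq[0,1]^{n-1}$, we thus have $A=\{(\bx',\phi(\bx'))\colon\bx'\in S\}$ for a well-defined $\phi\colon S\to[0,1]$, and the antichain property forces $\phi$ to be non-increasing in each coordinate: if $\bx'\le\by'$ in $S$ and $\phi(\bx')\le\phi(\by')$, then $(\bx',\phi(\bx'))\le(\by',\phi(\by'))$, so these points --- hence $\bx'$ and $\by'$ --- must coincide. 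Extending $\phi$ to $\bar\phi\colon[0,1]^{n-1}\to[0,1]$ by $\bar\phi(\bx'):=\sup\{\phi(\by')\colon\by'\in S,\ \by'\ge\bx'\}$ (with $\sup\emptyset=0$), one checks that $\bar\phi$ is still non-increasing in each coordinate, takes values in $[0,1]$, and restricts to $\phi$ on $S$; consequently $A$ is contained in the graph $\Gamma$ of $\bar\phi$, and since $\Ha^{n-1}$ is monotone as an outer measure it suffices to prove $\Ha^{n-1}(\Gamma)\le n$. In particular no measurability assumption on $A$ is needed.

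The remaining task is the purely analytic statement that the graph $\Gamma_\psi$ of a function $\psi\colon[0,1]^{n-1}\to[0,1]$ that is non-increasing in each variable satisfies $\Ha^{n-1}(\Gamma_\psi)\le 1+\mathrm{Var}(\psi)$, where $\mathrm{Var}(\psi)$ is the total variation of the distributional gradient; applying this to $\psi=\bar\phi$ and bounding $\mathrm{Var}(\bar\phi)$ then yields $n$. Indeed, such a $\psi$ is of bounded variation, and the (literal) graph of a $\mathrm{BV}$ function on an open set $\Omega\subseteq\R^{n-1}$ has $\Ha^{n-1}$-measure at most $\Le^{n-1}(\Omega)+\mathrm{Var}(\psi)$: over the absolutely continuous part the graph contributes $\int_\Omega\sqrt{1+|\nabla\psi|^2}\le\Le^{n-1}(\Omega)+\int_\Omega|\nabla\psi|$, the Cantor part of $D\psi$ contributes its variation, and the jump set (being countably $\Ha^{n-2}$-rectifiable) contributes nothing to the literal graph --- equivalently, $\Gamma_\psi$ lies, up to an $\Ha^{n-1}$-null set, in the reduced boundary of the subgraph $\{(\bx',t)\colon t<\psi(\bx')\}$, whose perimeter is exactly the relaxed area. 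Finally, for $\psi$ non-increasing in each variable and valued in $[0,1]$ one has $\mathrm{Var}(\psi)\le\sum_{i=1}^{n-1}|D_i\psi|([0,1]^{n-1})$ and, by Fubini together with monotonicity in $x_i$, $|D_i\psi|([0,1]^{n-1})=\int_{[0,1]^{n-2}}\bigl(\psi|_{x_i=0}-\psi|_{x_i=1}\bigr)\le 1$; hence $\mathrm{Var}(\psi)\le n-1$ and $\Ha^{n-1}(A)\le\Ha^{n-1}(\Gamma)\le 1+(n-1)=n$.

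The delicate point, and the one I expect to cost the most work, is the graph estimate $\Ha^{n-1}(\Gamma_\psi)\le\Le^{n-1}(\Omega)+\mathrm{Var}(\psi)$ with the \emph{sharp} constant. A crude covering of $\Gamma$ by grid cubes of side $\delta$ --- using that $A$, being an antichain, meets few of them --- only yields a bound of order $n\cdot\alpha(n-1)(\sqrt n/2)^{n-1}$, which already for $n=2$ gives $2\sqrt2$ rather than $2$; one genuinely has to exploit the staircase structure of a monotone graph, covering its ``treads'' by nearly flat $(n-1)$-dimensional pieces that project injectively downward and its ``risers'' by thin vertical slabs whose total measure is controlled by the oscillation of $\bar\phi$, or else invoke the $\mathrm{BV}$ perimeter formula for subgraphs, which builds the right constant in automatically. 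It is worth recording that the bound is attained when $n=2$: if $g\colon[0,1]\to[0,1]$ is a strictly increasing continuous singular function with $g(0)=0$ and $g(1)=1$, then the graph of $\bar\phi=1-g$ is a genuine antichain whose length equals $\Le^1([0,1])+\mathrm{Var}(g)=1+1=2$, which also serves as a sanity check on the argument above.
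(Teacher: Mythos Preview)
The paper does not prove this statement: Theorem~\ref{empr} is quoted from~\cite{EMPR} and invoked as a black box in the proof of Theorem~\ref{k_ant_thm}, so there is no in-paper argument to compare against. Your route---realise $A$ as a subset of the graph of a coordinate-wise non-increasing extension $\bar\phi:[0,1]^{n-1}\to[0,1]$, embed that graph in the essential boundary of the subgraph, and bound the perimeter by the relaxed area $\int\sqrt{1+|\nabla^a\bar\phi|^2}+|D^s\bar\phi|\le 1+\mathrm{Var}(\bar\phi)\le 1+(n-1)$---is correct in outline and yields the sharp constant. Two points deserve tightening. First, the inclusion $\Gamma_{\bar\phi}\subset\partial_e\{t<\bar\phi(\bx')\}$ is best argued directly from monotonicity rather than via the ac/Cantor/jump trichotomy (your heuristic there is a bit tangled): at any graph point with $\bx'\in(0,1)^{n-1}$, the orthant $\{\by'\le\bx',\,t<\bar\phi(\bx')\}$ lies in the subgraph and the opposite orthant lies in its complement, forcing the density to lie in $[2^{-n},1-2^{-n}]$. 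Second, the perimeter estimate is taken over $(0,1)^{n-1}\times\R$ and says nothing about $A\cap\bigl(\partial([0,1]^{n-1})\times[0,1]\bigr)$; this piece is dispatched by induction on $n$, since $A$ intersected with any face $\{x_i=c\}$ is an antichain in $[0,1]^{n-1}$, hence has finite $\Ha^{n-2}$-measure and therefore zero $\Ha^{n-1}$-measure. For context, the title of~\cite{EMPR} indicates their argument is projection-based rather than perimeter-based; your approach trades an elementary covering/projection argument for BV and sets-of-finite-perimeter machinery, which packages the sharp constant neatly but imports more theory.
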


In particular, the Hausdorff dimension of an antichain is at most $n-1$. 
Let us remark that the bound provided by Theorem~\ref{empr} is asymptotically sharp. Indeed,  as is observed in~\cite{EMPR}, this can be seen by considering the boundary of $\ell^p$-unit balls, i.e., by considering the sets 
\[
A_p = \left\{ \bx \in [0,1]^n : \|\bx\|_p = 1 \right\} \, ,
\]
as $p\to\infty$. Notice that $A_p$ is an antichain in $[0,1]^n$, but $A_{\infty}$ is {\em not}. Moreover, notice that $\Ha^{n-1}(A_{\infty})=n$. 
Now, it is not difficult to see that the $p$-ball $B_p = \{\bx\in \R^n : \|\bx\|_p \le 1\}$ converges, with respect to the Hausdorff distance, to the $\infty$-ball $B_{\infty} = \{\bx\in \R^n : \|\bx\|_{\infty} \le 1\}$. 
Furthermore, it is known (see~\cite[p.~219]{Schneider}) that whenever a sequence of convex bodies $B_i$ converges, with respect to the Hausdorff distance, to a convex body $B$, then it follows that 
$\Ha^{n-1}(\partial B_i)$ converges to $\Ha^{n-1}(B)$. Hence $\Ha^{n-1}(A_p)$ tends to $n$, as $p\to\infty$, and therefore one can find an antichain in $[0,1]^n$ whose $\Ha^{n-1}$-measure is arbitrarily  close to $n$. 
There remains the question of whether there exists an antichain whose $\Ha^{n-1}$-measure is equal to $n$. The following conjecture has been put forward in~\cite{EMPR}. 

\begin{conjecture}[Engel et al.~\cite{EMPR}]
\label{conj_empr}
There exists an antichain in $[0,1]^n$ such that $\Ha^{n-1}(A) = n$. 
\end{conjecture}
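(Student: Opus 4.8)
The plan is to first extract, from the proof of the bound $\Ha^{n-1}(A)\le n$, a precise description of the equality case, and then to build an antichain meeting it. Assume $A$ is an $(n-1)$-rectifiable antichain, so that at $\Ha^{n-1}$-almost every point there is an approximate unit normal $\vv=(\nu_1,\ldots,\nu_n)$. For each $i\in[n]$ let $\pi_i\colon[0,1]^n\to[0,1]^{n-1}$ be the orthogonal projection forgetting the $i$-th coordinate. If $\bx,\by\in A$ satisfy $\pi_i(\bx)=\pi_i(\by)$, then $\bx$ and $\by$ agree in every coordinate except the $i$-th and are therefore comparable, so $\bx=\by$; thus $\pi_i$ is injective on $A$. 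The area formula then gives $\Ha^{n-1}(\pi_i(A))=\int_A|\nu_i|\,d\Ha^{n-1}$, and summing over $i$,
\[
\Ha^{n-1}(A)=\int_A 1\,d\Ha^{n-1}\le \int_A\sum_{i=1}^n|\nu_i|\,d\Ha^{n-1}=\sum_{i=1}^n\Ha^{n-1}(\pi_i(A))\le n,
\]
where the first inequality is $\|\vv\|_2=1\le\|\vv\|_1$ and the last uses $\pi_i(A)\subset[0,1]^{n-1}$. Hence $\Ha^{n-1}(A)=n$ forces two conditions: (a) the normal $\vv$ equals some $\pm e_j$ at $\Ha^{n-1}$-almost every point of $A$, and (b) each projection fills its facet, $\Ha^{n-1}(\pi_i(A))=1$ for every $i$. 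These are the blueprint for the construction.

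For $n=2$ this blueprint is realised by a \emph{singular} graph. Let $g\colon[0,1]\to[0,1]$ be continuous and strictly increasing with $g(0)=0$, $g(1)=1$ and $g'=0$ Lebesgue-almost everywhere (for instance Minkowski's question-mark function, which is strictly increasing and singular), and put $A=\{(t,1-g(t)):t\in[0,1]\}$. Strict monotonicity makes any two distinct points of $A$ incomparable, so $A$ is an antichain; and the length of the graph of a continuous monotone function equals $\int_0^1\sqrt{1+g'(t)^2}\,dt$ plus the total mass of the singular part of $dg$, which for a purely singular $g$ evaluates to $1+1=2$. In the blueprint's terms, the tangent is horizontal where $g'=0$ (full measure in the domain, filling the projection to the first axis) and ``vertical'' along the singular set (filling the projection to the second axis), so both (a) and (b) hold; note that $A$ contains no genuine segment.

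For general $n$ the plan is to construct a strictly monotone — hence antichain — rectifiable hypersurface that is singular in exactly this sense: although it contains no flat coordinate-hyperplane piece (which for $n\ge 3$ would itself carry comparable pairs), its approximate tangent is a coordinate hyperplane $\Ha^{n-1}$-almost everywhere. Concretely, I would begin from the smooth antichains $A_p=\{\bx\in[0,1]^n:\|\bx\|_p=1\}$, whose measures already tend to $n$, and try to realise the limit by a self-similar, Cantor/Minkowski-type refinement that concentrates the deviation of the tangent from the coordinate directions onto an $\Ha^{n-1}$-null set while keeping the surface strictly monotone at every finite stage. The pieces on which the tangent is the hyperplane $\{x_i=\text{const}\}$ must be distributed so that their union projects onto the $i$-th facet with full measure; granting this, (a) and (b) hold and $\Ha^{n-1}(A)=\sum_i\Ha^{n-1}(\pi_i(A))=n$ follows from the first paragraph.

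The step I expect to be the main obstacle is maintaining the global antichain property for $n\ge 3$. When $n=2$ a single strictly increasing function encodes the whole antichain and incomparability is automatic; for $n\ge 3$ there is no such one-variable device. The axis-parallel tangents demanded by (a) arise from several families of parallel hyperplanes $\{x_i=\text{const}\}$, and two pieces from different families — one nearly lying in $\{x_i=a\}$ and one in $\{x_j=b\}$ — can contain a comparable pair even when each piece is locally harmless. Incomparability must therefore be enforced by the combinatorics of how the pieces are arranged across all scales of the refinement, and this has to be achieved simultaneously with keeping each of the $n$ projections full. I expect reconciling these two demands to be the crux, and it is the reason the present paper settles the case $n=2$ while leaving $n\ge 3$ as Conjecture~\ref{conj_empr}.
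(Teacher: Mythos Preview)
The statement is presented in the paper as a conjecture, verified only for $n\le 2$; for $n=2$ the paper simply invokes the folklore Theorem~\ref{folk} to conclude that the graph of any strictly decreasing singular bijection $[0,1]\to[0,1]$ is an antichain of $\Ha^{1}$-measure $2$. Your second paragraph is exactly this argument (with $t\mapsto 1-g(t)$ playing the role of the decreasing singular function), so on the portion that is actually established your approach and the paper's coincide.

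The equality analysis in your first paragraph and the outline for $n\ge 3$ go beyond anything the paper attempts, but---as you yourself acknowledge in your final paragraph---they amount to a programme rather than a proof; the conjecture remains open for $n\ge 3$ both in the paper and in your proposal. One minor remark: the length formula you quote for a general continuous monotone function is itself a nontrivial fact; in the purely singular case you actually use it reduces precisely to Theorem~\ref{folk}, which is what the paper cites.
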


When $n=1$ this conjecture is clearly true, and when $n=2$ it is observed in~\cite{EMPR} that the validity of Conjecture~\ref{conj_empr} is an immediate consequence of the following, well-known, result. 
Recall that a {\em singular function} $f:[a,b]\to [c,d]$ is a strictly decreasing function whose derivative equals zero almost everywhere. 

\begin{theorem}[Folklore] 
\label{folk}
Let $f:[a,b]\to [c,d]$ be a singular function and let $G_f =\{(x,f(x)): x\in [a,b]\}$ be its graph. Then 
$\Ha^1(G_f) = (b-a) + (d-c)$. 
\end{theorem}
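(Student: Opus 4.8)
The plan is to prove the two matching bounds $\Ha^1(G_f)\le (b-a)+(d-c)$ and $\Ha^1(G_f)\ge (b-a)+(d-c)$ separately. Throughout I would use two elementary facts: $\Ha^1$ agrees with Lebesgue measure $\Le^1$ on the real line, and a $1$-Lipschitz map cannot increase $\Ha^1$; in particular the coordinate projections $\pi_1(x,y)=x$ and $\pi_2(x,y)=y$ are $1$-Lipschitz. As is customary for the term ``singular function'', I treat $f$ as continuous and surjective onto $[c,d]$ (a strictly monotone surjection onto an interval is automatically continuous), so $f(a)=d$ and $f(b)=c$.

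\textbf{Upper bound.} This direction uses only that $f$ is decreasing. Fix $\delta>0$ and, by uniform continuity, choose a partition $a=t_0<t_1<\cdots<t_N=b$ fine enough that $t_i-t_{i-1}\le\delta$ and $f(t_{i-1})-f(t_i)\le\delta$ for every $i$. Since $f$ decreases, the part of $G_f$ over $[t_{i-1},t_i]$ lies in the rectangle $R_i=[t_{i-1},t_i]\times[f(t_i),f(t_{i-1})]$, of diameter at most $(t_i-t_{i-1})+(f(t_{i-1})-f(t_i))\le 2\delta$. Hence $\{R_i\}_{i=1}^N$ covers $G_f$ by sets of diameter $\le 2\delta$ with
\[
\sum_{i=1}^N \diam R_i \le \sum_{i=1}^N (t_i-t_{i-1})+\sum_{i=1}^N\bigl(f(t_{i-1})-f(t_i)\bigr)=(b-a)+(d-c),
\]
and since $\delta$ is arbitrary while the right-hand side does not depend on it, the definition of Hausdorff measure gives $\Ha^1(G_f)\le (b-a)+(d-c)$.

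\textbf{Lower bound.} This is where singularity enters, via the standard lemma: if $g:[a,b]\to\R$ is differentiable at each point of a set $E$ and $g'\equiv 0$ on $E$, then $\Le^1(g(E))=0$. I would prove it by writing $E=\bigcup_{n}\bigcup_{k}E_{n,k}$ with $E_{n,k}=\{x\in E:\ |g(y)-g(x)|\le\tfrac1n|y-x|\ \text{whenever}\ |y-x|\le\tfrac1k\}$, partitioning $[a,b]$ into intervals of length $<\tfrac1k$, noting that $g$ contracts each such interval that meets $E_{n,k}$ by a factor $\le\tfrac1n$, so that $\Le^1(g(E_{n,k}))\le\tfrac{b-a}{n}$, and then letting $k\to\infty$ (using continuity from below of the outer measure) and $n\to\infty$. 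With this in hand, I would fix a Borel set $E\subseteq[a,b]$ of full measure on which $f$ is differentiable with $f'=0$, put $N=[a,b]\setminus E$, and split $G_f=G_1\sqcup G_2$ with $G_1=G_f\cap(E\times\R)$ and $G_2=G_f\cap(N\times\R)$. Because $E\times\R$ is Borel and hence $\Ha^1$-measurable, $\Ha^1(G_f)=\Ha^1(G_1)+\Ha^1(G_2)$. Projecting via $\pi_1$ gives $\Ha^1(G_1)\ge\Ha^1(E)=b-a$; the lemma gives $\Le^1(f(E))=0$, so surjectivity forces $\Le^1(f(N))=d-c$, and projecting $G_2$ via $\pi_2$ gives $\Ha^1(G_2)\ge\Le^1(f(N))=d-c$. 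Adding these yields $\Ha^1(G_f)\ge(b-a)+(d-c)$.

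The part I expect to demand the most care is the lemma on the image of $\{g'=0\}$ — a routine Vitali/covering argument that nonetheless has to be organized correctly — together with the small measure-theoretic bookkeeping (choosing $E$ Borel) that makes the splitting $\Ha^1(G_f)=\Ha^1(G_1)+\Ha^1(G_2)$ legitimate. The remaining ingredients are just the stability of $\Ha^1$ under $1$-Lipschitz maps and its identification with $\Le^1$ on the line.
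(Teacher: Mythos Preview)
Your argument is correct. Note, however, that the paper does not itself prove Theorem~\ref{folk}: it is stated as folklore, and the reader is referred to \cite{Foran} for a sketch. So there is no in-paper proof to compare against.

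What you outline is in fact the standard route. The upper bound via fine rectangular covers whose diameters are dominated by the telescoping sum $(b-a)+(d-c)$ is the usual argument, and your lower bound is the expected one: the Sard-type lemma shows that $f$ sends the full-measure set $E=\{f'=0\}$ to a null set, so the null complement $N$ must carry all of the vertical variation, and the two projections $\pi_1$, $\pi_2$ applied to $G_1$ and $G_2$ recover $b-a$ and $d-c$ separately. Your attention to the bookkeeping --- taking $E$ Borel so that $E\times\R$ is $\Ha^1$-measurable and the split $\Ha^1(G_f)=\Ha^1(G_1)+\Ha^1(G_2)$ is legitimate via Carath\'eodory --- is precisely the point that needs to be handled with care, and you handle it correctly.
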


We refer the reader to~\cite[p.~101]{Saks} for details regarding the existence of singular functions, and to~\cite[p.~810]{Foran} for a sketch of a proof of Theorem~\ref{folk}. Since the graph of a singular function $f:[0,1]\to [0,1]$ is clearly an antichain in $[0,1]^2$, it follows that Conjecture~\ref{conj_empr} holds true when $n=2$. 

In this note we focus on $k$-antichains in $[0,1]^n$, for $k>1$. Using Theorem~\ref{empr}, we obtain the following upper bound on the maximum ``size" of a $k$-antichain in the unit $n$-cube. 

\begin{theorem}\label{k_ant_thm} 
Fix a positive integer $k\ge 1$. If $A$ is a $k$-antichain in $[0,1]^n$, then 
\[
\Ha^{n-1}(A)\le kn \, . 
\]
\end{theorem}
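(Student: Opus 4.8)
The plan is to reduce to the case $k=1$, which is Theorem~\ref{empr}, by partitioning $A$ into $k$ antichains — one for each ``height'' — and then invoking countable subadditivity of the Hausdorff outer measure. First I would record the (essentially trivial) reformulation of the hypothesis: $A$ is a $k$-antichain in $[0,1]^n$ if and only if $A$, regarded as a subposet of $[0,1]^n$ ordered by $\le$, contains no chain consisting of $k+1$ points. Indeed, for any chain $C\subset[0,1]^n$ the intersection $A\cap C$ is a chain contained in $A$, and conversely every chain contained in $A$ is a chain in $[0,1]^n$; so the condition $\Ha^0(A\cap C)\le k$ for all chains $C$ is the same as saying every chain contained in $A$ has at most $k$ elements.

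Next, for $\bx\in A$ let $f(\bx)$ be the supremum of the cardinalities of chains $C\subset A$ whose maximum element is $\bx$. Since $\{\bx\}$ is such a chain, $f(\bx)\ge 1$, and since $A$ contains no chain with more than $k$ points, $f(\bx)\le k$; thus $f\colon A\to\{1,\ldots,k\}$, and this supremum is attained. Put $A_j=f^{-1}(j)$ for $j\in[k]$. I claim each $A_j$ is an antichain in $[0,1]^n$: if $\bx,\by\in A_j$ with $\bx\le\by$ and $\bx\neq\by$, then appending $\by$ to a chain of size $j$ in $A$ with maximum $\bx$ yields a chain of size $j+1$ in $A$ with maximum $\by$, forcing $f(\by)\ge j+1>j$, a contradiction. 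Hence $A=\bigcup_{j=1}^{k}A_j$ is a partition of $A$ into $k$ antichains in $[0,1]^n$.

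Finally, because $\Ha^{n-1}$ is an outer measure it is countably subadditive on arbitrary subsets of $\R^n$, so
\[
\Ha^{n-1}(A)\le\sum_{j=1}^{k}\Ha^{n-1}(A_j),
\]
and applying Theorem~\ref{empr} to each $A_j$ bounds every summand by $n$, which gives $\Ha^{n-1}(A)\le kn$.

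I do not expect a genuine obstacle here: the statement is a soft consequence of Theorem~\ref{empr}, and the argument is just the continuous counterpart of Mirsky's theorem on partitions of a poset into antichains. The only two points deserving a moment's care are that the ``height'' function $f$ is well defined, which is exactly what the uniform bound $k$ on the length of chains contained in $A$ guarantees, and that no measurability hypothesis on $A$ or on the pieces $A_j$ is required — we use only subadditivity of the outer measure, never additivity — so the decomposition, even if highly irregular, causes no difficulty.
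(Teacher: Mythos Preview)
Your proof is correct and follows essentially the same strategy as the paper: decompose $A$ into $k$ antichains and apply Theorem~\ref{empr} to each piece, using only subadditivity of $\Ha^{n-1}$. The only difference is cosmetic --- the paper proceeds by induction, peeling off the set $B$ of minimal elements of $A$ (which coincides with your $A_1$) and showing that $A\setminus B$ is a $(k-1)$-antichain, whereas you give the full Mirsky-type layer decomposition in one step via the height function.
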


Using a similar argument as the one used in the remarks after Theorem~\ref{empr}, it can be shown that the upper bound provided by Theorem~\ref{k_ant_thm} is asymptotically sharp, and it is therefore natural to ask whether there exist $k$-antichains in $[0,1]^n$ whose $\Ha^{n-1}$-measure is equal to $kn$. 
We conjecture that the answer is in the affirmative, for all $n\ge 2$, and in this note we 
verify the validity of this conjecture for $n=2$.

\begin{theorem}\label{sharpness} 
There exists a $k$-antichain in $[0,1]^2$ such that $\Ha^1(A)=2k$. 
\end{theorem}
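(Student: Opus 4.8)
The plan is to realise the extremal $k$-antichain as a union of $k$ graphs of singular functions that are pairwise disjoint apart from two harmless points. First I would reduce the problem to producing $k$ continuous, strictly decreasing singular functions $g_1,\dots,g_k\colon[0,1]\to[0,1]$, each mapping $[0,1]$ onto $[0,1]$, with $g_1(x)>g_2(x)>\cdots>g_k(x)$ for all $x\in(0,1)$. Granting these, set $G_i=\{(x,g_i(x)):x\in[0,1]\}$ and $A=\bigcup_{i=1}^k G_i$. Each $G_i$ is an antichain, since the graph of a strictly decreasing function contains no two comparable points; hence every chain $C\subset[0,1]^2$ satisfies $\Ha^0(C\cap G_i)\le 1$, and therefore $\Ha^0(C\cap A)\le k$, so $A$ is a $k$-antichain. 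For its measure, the strict ordering forces $g_i$ and $g_j$ ($i\ne j$) to agree only at $0$ and $1$, so the $G_i$ overlap only in the two points $(0,1)$ and $(1,0)$, a set of $\Ha^1$-measure zero; since Theorem~\ref{folk} gives $\Ha^1(G_i)=(1-0)+(1-0)=2$ for each $i$, additivity of $\Ha^1$ on Borel sets yields $\Ha^1(A)=\sum_{i=1}^k\Ha^1(G_i)=2k$ (the reverse inequality $\Ha^1(A)\le 2k$ being in any case already supplied by Theorem~\ref{k_ant_thm}).

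To build the $g_i$ I would start from a single continuous strictly decreasing singular function $g\colon[0,1]\to[0,1]$ with $g(0)=1$, $g(1)=0$, whose existence is classical (see~\cite[p.~101]{Saks}), and then reparametrise it. For $i=1,\dots,k$ let $\phi_i$ be the piecewise-linear increasing homeomorphism of $[0,1]$ determined by $\phi_i(0)=0$, $\phi_i\bigl(\tfrac12\bigr)=\tfrac{k+1-i}{k+1}$, $\phi_i(1)=1$, and set $g_i=g\circ\phi_i^{-1}$. A short check shows $\phi_1(x)>\cdots>\phi_k(x)$ for all $x\in(0,1)$, whence $\phi_1^{-1}(x)<\cdots<\phi_k^{-1}(x)$ there, and then, $g$ being strictly decreasing, $g_1(x)>\cdots>g_k(x)$ on $(0,1)$; moreover each $g_i$ is continuous, strictly decreasing, and maps $[0,1]$ onto $[0,1]$ with $g_i(0)=1$, $g_i(1)=0$.

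The one step that requires care — and where I expect the real work to lie — is checking that $g_i=g\circ\phi_i^{-1}$ is still singular, i.e.\ that $g_i'=0$ almost everywhere, as this is what licenses applying Theorem~\ref{folk} to $g_i$. Here the point is that $\phi_i$ is bi-Lipschitz, so $\phi_i^{-1}$ is Lipschitz. If $N=\{u\in[0,1]:g'(u)\text{ does not exist or }g'(u)\ne 0\}$, then $N$ is Lebesgue-null, and the bad set $\{x:\phi_i^{-1}(x)\in N\}=\phi_i(N)$ is the image of a null set under a Lipschitz map, hence null; outside this set together with the null set on which $\phi_i^{-1}$ fails to be differentiable with finite derivative, the chain rule gives $g_i'(x)=g'\bigl(\phi_i^{-1}(x)\bigr)\,(\phi_i^{-1})'(x)=0$. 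Once this is in place the argument of the first paragraph closes the proof. I would also note that the $G_i$ sharing the two corner points is unavoidable: any graph of a function on $[0,1]$ that is an antichain of $\Ha^1$-measure $2$ must run between $(0,1)$ and $(1,0)$, so one cannot ask the $G_i$ to be literally disjoint — but finite sets are $\Ha^1$-null, so this costs nothing.
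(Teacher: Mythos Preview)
Your argument is correct, and it takes a genuinely different and more elementary route than the paper. The paper does not produce the $k$ singular functions directly; instead it proves an auxiliary lemma (Lemma~\ref{help}) asserting that between any two strictly decreasing continuous bijections $g<h$ of $[0,1]$ one can thread a strictly decreasing function $D:(0,1)\to(0,1)$ with $\Ha^1(Gr(D))=2$, and the proof of that lemma is a somewhat delicate construction: one builds two monotone sequences $x_n\downarrow 0$, $y_n\uparrow 1$ so that the rectangles $R[(x_{n+1},f(x_{n+1})),(x_n,f(x_n))]$ and $R[(y_n,f(y_n)),(y_{n+1},f(y_{n+1}))]$ (where $f=\tfrac12(g+h)$) lie inside $W(g,h)$, invokes Theorem~\ref{folk} on each rectangle separately, and glues the resulting pieces. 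The theorem then follows by choosing $2k$ nested bijections $f_1>\cdots>f_{2k}$ and applying the lemma inside each strip $W(f_{2i},f_{2i-1})$ to obtain pairwise disjoint graphs $Gr(D_i)\subset(0,1)^2$.

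Your approach bypasses all of this: you take a single singular bijection $g$ and pre-compose with piecewise-linear (hence bi-Lipschitz) homeomorphisms $\phi_i^{-1}$, and the key observation that singularity is preserved under such composition --- because Lipschitz maps send null sets to null sets, so the chain rule forces $g_i'=0$ a.e.\ --- lets you apply Theorem~\ref{folk} directly to each $g_i$. This is shorter and avoids the inductive rectangle construction entirely. What the paper's lemma buys, on the other hand, is a bit more: it shows one can place a length-$2$ antichain graph inside an \emph{arbitrary} prescribed strip $W(g,h)$, which your reparametrisation trick does not obviously yield, though for Theorem~\ref{sharpness} itself that extra generality is not needed. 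Your handling of the two shared corner points $(0,1)$ and $(1,0)$ is also fine; the paper sidesteps this by working on the open square, but as you note a finite overlap is $\Ha^1$-null and costs nothing.
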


\section{Proofs}

\begin{proof}[Proof of Theorem~\ref{k_ant_thm}]
It is enough to show that there exist $k$ sets $A_1,\ldots,A_k\subset [0,1]^n$ such that  $A=\cup_{i=1}^{k}A_i$ and each $A_i$ is an antichain. Theorem~\ref{k_ant_thm} then follows from 
Theorem~\ref{empr}. We prove the required result by induction on $k$. The case $k=1$ is clear. 
Assuming that the result holds true for $k-1>1$, we prove it for $k$. 
Let $B$ be the set consisting of all minimal elements of $A$. That is, let 
\[
B = \{\bx\in A : \text{ there is no } \by\in A\setminus \{x\} \text{ satisfying } \by \le \bx \} \, .
\]
Clearly, $B$ is an antichain and it is enough to show that $A\setminus B$ is a $(k-1)$-antichain in $[0,1]^n$; the result then follows from the induction hypothesis. 
Assume, towards a contradiction, that $A\setminus B$ is not a  $(k-1)$-antichain. This implies that 
there exists a chain $C\subset [0,1]^n$ such that $\Ha^0((A\setminus B) \cap C) \ge k$.  
Let $\by\in (A\setminus B) \cap C$ be a minimal element, i.e, $\by$ is such that there does not exist 
$\mathbf{z}$, which is distinct from $\by$, satisfying  
$\mathbf{z}\in (A\setminus B) \cap C$ and $\mathbf{z}\le \by$.  Notice that the existence of $\by$ 
follows from the fact that, since $A$ is a $k$-antichain,  $(A\setminus B) \cap C$ is a finite set. 
Since $\by\notin B$ it follows that there exists $\bx\in A$ such that $\bx\neq\by$ and $\bx\le \by$. 
Now set $D:= \{\bx\} \cup (A\setminus B) \cap C$ and notice that $D$ is a chain that satisfies 
$\Ha^0(D\cap A)\ge k+1$, contrariwise to the fact that $A$ is a $k$-antichain. The result follows. 
\end{proof}

We proceed with the proof of Theorem~\ref{sharpness}. This requires some additional piece of notation. 
Given two functions $g,h:[0,1]\to [0,1]$, let 
\[
W(g,h) := \{  (x,y)\in [0,1]^2 : g(x) \le y \le h(x) \}\, .
\] 
Given a function $g:[0,1]\to [0,1]$, let $Gr(g) =\{(x,y)\in [0,1]^2: y = g(x)\}$ be its graph. 
If $A\subset [0,1]^2$, we denote its interior by $\inter(A)$. 
Finally, given two points $\bx=(x_1,x_2), \by=(y_1,y_2)\in \R^2$ with $x_1<y_1$ and 
$x_2 > y_2$, let 
\[
R[\bx,\by] := \{ (z_1,z_2)\in \R^2 : z_1\in [x_1,y_1] \text{ and } z_2\in [y_2, x_2]  \}
\]
be the rectangle ``determined" by the points $\bx,\by$. The proof of Theorem~\ref{sharpness} relies upon the following. 

\begin{lemma}
\label{help}
Let $g,h:[0,1]\to [0,1]$ be strictly decreasing and continuous bijections such that $g(x) < h(x)$, for all $x\in (0,1)$. Then there exists a strictly decreasing function $D:(0,1)\to (0,1)$ such that
\begin{itemize}

\item[(a)] $g(x)\leq D(x)\leq h(x)$ for every $x\in(0,1)$,
\item[(b)] $\Ha^1(Gr(D))=2$. 

\end{itemize}
\end{lemma}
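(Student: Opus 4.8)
The plan is to build the function $D$ on $(0,1)$ as a ``limit'' of a sequence of decreasing functions obtained by an iterative refinement, in the spirit of the construction of a singular function as in Theorem~\ref{folk}, but carried out \emph{within} the region $W(g,h)$ so that the graph stays pinned between the two given curves. The guiding idea is that the graph $Gr(D)$ of a strictly decreasing function from a subinterval of the $x$-axis into a subinterval of the $y$-axis has $\Ha^1$-measure at least the sum of the lengths of the horizontal and vertical projections of its domain and range, with equality precisely when $D$ is singular (this is one half of Theorem~\ref{folk}). So the target $\Ha^1(Gr(D))=2$ is exactly the statement that the projections of $Gr(D)$ onto the two axes together cover essentially all of $[0,1]$ in each coordinate and that $D$ is singular.

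First I would set up the recursion. Start with the degenerate ``staircase'' consisting of the single corner point forced by $g$ and $h$ — more precisely, since $g$ and $h$ are strictly decreasing continuous bijections of $[0,1]$ with $g<h$ on $(0,1)$, pick an initial point $\bx_0=(a_0,b_0)$ on, say, the graph of $g$ (or any point with $g(a_0)\le b_0\le h(a_0)$), and consider the two rectangles it cuts $[0,1]^2$ into. At each stage I have a finite decreasing ``staircase'' of points $\bx_0 > \bx_1 > \cdots$ wait — rather, a finite set of points $P_m=\{(a_1,b_1),\dots,(a_{N_m},b_{N_m})\}$ with $a_1<\cdots<a_{N_m}$ and $b_1>\cdots>b_{N_m}$, all lying in $\overline{W(g,h)}$, and I refine $P_m$ to $P_{m+1}$ by inserting, into each ``gap'' rectangle $R[\bx_i,\bx_{i+1}]$, a new point of $\overline{W(g,h)}$ chosen inside that rectangle. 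The key geometric fact making this possible is that because $g(x)<h(x)$ on $(0,1)$ and both are continuous, the region $W(g,h)$ meets the interior of every axis-parallel rectangle $R[\bx_i,\bx_{i+1}]$ with $\bx_i,\bx_{i+1}\in \overline{W(g,h)}$ and $\bx_i\ne \bx_{i+1}$; in fact one can choose the inserted point so that it splits the rectangle into two sub-rectangles whose $x$-extents (or whose $y$-extents, alternating by level) are each at least, say, one third of the parent's — this controlled splitting is what will force singularity in the limit.

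The second step is to pass to the limit. The staircases $P_m$ define a nested sequence of ``allowed'' regions, and the closure of $\bigcup_m P_m$ is the graph of a nondecreasing-in-reverse, i.e. strictly decreasing, function $D:(0,1)\to(0,1)$ (after checking that the $a_i$'s become dense in $[0,1]$ and the $b_i$'s become dense in $[0,1]$, which follows from the geometric-ratio control on the splittings, exactly as in the classical singular-function construction). Property (a), $g(x)\le D(x)\le h(x)$, is automatic: every approximating point lies in $\overline{W(g,h)}$, this property is closed, and $D$'s graph is contained in that closure. For property (b), I would invoke the lower bound half of Theorem~\ref{folk}-type reasoning: $\Ha^1(Gr(D))\ge (\text{length of }x\text{-projection}) + (\text{length of }y\text{-projection}) = 1+1 = 2$ because both projections are dense hence of full measure $1$ (the $x$-projection is all of $(0,1)$ since $D$ is defined everywhere on $(0,1)$; the $y$-projection is shown to have measure $1$ via the splitting ratios, equivalently the ``jump set'' of $D$ has full measure in the complement direction). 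For the reverse inequality $\Ha^1(Gr(D))\le 2$ one needs $D$ to be singular, i.e.\ $D'=0$ a.e.; this follows because the refinement forces the total horizontal variation contributed by any fixed Lebesgue-positive set of $x$ to vanish — the oscillation within each level-$m$ rectangle shrinks geometrically while the ``flat'' portions of the staircase accumulate full measure. Alternatively, and more cleanly, one can arrange that $Gr(D)$ is covered at stage $m$ by $N_m$ rectangles whose diameters are controlled, giving a direct $\Ha^1$-cover of total length tending to $2$.

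The main obstacle I anticipate is the simultaneous control of \emph{both} projections during the refinement: a naive bisection construction of a singular function controls, say, the $x$-side at odd steps and the $y$-side at even steps, but here the inserted points are constrained to lie in $W(g,h)$, and one must verify that this constraint never prevents a ``fat'' split in whichever direction is currently being refined. This is where the hypotheses $g<h$ on $(0,1)$ and the continuity/bijectivity of $g,h$ are essential — they guarantee that the strip $W(g,h)$ has nonempty interior throughout $(0,1)$, so inside any gap rectangle there is genuine room in the interior of $W(g,h)$ to place the next corner with a guaranteed proportion of the rectangle's width (respectively height) on each side. Making this ``guaranteed proportion'' uniform in $m$ — so that the geometric-series estimates for density of the projections and for singularity both go through — is the technical heart of the argument, and I would state it as a short sublemma: for every rectangle $R[\bx,\by]$ with $\bx,\by\in\overline{W(g,h)}$, $\bx\ne\by$, and for each of the two coordinate directions, there is a point $\mathbf{z}\in \inter(R[\bx,\by])\cap W(g,h)$ splitting $R$ in that direction into pieces each of relative size in $[\tfrac13,\tfrac23]$. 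Once that sublemma is in hand, the rest is the standard singular-function limiting argument adapted to this setting, together with Theorem~\ref{folk} (or its proof technique) to evaluate $\Ha^1(Gr(D))$ exactly.
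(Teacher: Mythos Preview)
Your guiding inequality is stated in the wrong direction. For any strictly decreasing function $D:[a,b]\to[c,d]$ one has $\Ha^1(Gr(D))\le (b-a)+(d-c)$, with equality precisely when $D$ is singular; projection onto an axis is $1$-Lipschitz, so it only gives $\Ha^1(Gr(D))\ge\max(b-a,\,d-c)$, not the sum. The linear map $D(x)=1-x$ on $[0,1]$ already shows your ``$\ge 2$'' claim fails: its graph has $\Ha^1$-measure $\sqrt{2}$. Consequently the nontrivial direction is not the upper bound but making sure the limit function is actually singular, and this is where the refinement scheme breaks down. Controlling each split to lie in $[1/3,2/3]$ in one coordinate forces the side lengths of the level-$m$ rectangles to shrink, but it imposes no constraint on their aspect ratios, and it is the aspect ratios that govern the derivative of the limit. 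If, say, the centre of each current rectangle happens to lie in $W(g,h)$ (your sublemma permits choosing it), the limit of your construction is just the anti-diagonal, which is not singular and has $\Ha^1$-measure $\sqrt{2}$. The hand-wave about ``flat portions of the staircase accumulating full measure'' does not go through: your function is strictly decreasing at every stage and has no flat portions at all.

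The paper sidesteps this by not constructing the singular function from scratch. It fixes the midpoint curve $f=\tfrac12(g+h)$ and chooses sequences $x_1=y_1=\tfrac12$, $x_n\downarrow 0$, $y_n\uparrow 1$ along $Gr(f)$, greedily so that each rectangle $R[(x_{n+1},f(x_{n+1})),(x_n,f(x_n))]$ (and its $y$-counterpart) is contained in $W(g,h)$ but touches $\partial W(g,h)$; this maximality is what forces the limits to be $0$ and $1$. On each such rectangle Theorem~\ref{folk} is then invoked as a black box to produce a singular piece whose $\Ha^1$-length equals the sum of the side lengths, and the pieces are glued. The telescoping sums $\sum(x_n-x_{n+1})=\sum(y_{n+1}-y_n)=\tfrac12$ and $\sum(f(x_{n+1})-f(x_n))+\sum(f(y_n)-f(y_{n+1}))=1$ give $\Ha^1(Gr(D))=2$ immediately. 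Your approach can be repaired, but only by forcing the sub-rectangles to become increasingly eccentric at each step; arranging this uniformly while staying inside $W(g,h)$ is no easier than the paper's partition-and-cite argument.
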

\begin{proof}
Consider the function $f:[0,1]\to[0,1]$ defined by 
\[
f(x):=\frac12(g(x)+h(x)), \text{ for } x\in[0,1]\, .
\]
Clearly, $f$ is a strictly decreasing, continuous, bijection and $g(x)<f(x)<h(x)$ holds true for every $x\in(0,1)$. We will show that we can inductively construct sequences $\{x_n\}_n$ and $\{y_n\}_n$ that satisfy the following five conditions:
\begin{itemize}

\item[(i)] $\frac12=x_1>x_2>\dots>0$ and  $\frac12=y_1<y_2<\dots<1$,
\item[(ii)] $R[(x_{n+1},f(x_{n+1})),(x_n,f(x_n))]\subset W(g,h)$, $n\in\N$, 
\item[(iii)] $R[(y_n,f(y_n)),(y_{n+1},f(y_{n+1}))]\subset W(g,h)$, $n\in\N$,
\item[(iv)] $R[(x_{n+1},f(x_{n+1})),(x_n,f(x_n))]\not\subset \inter(W(g,h))$, $n\in\N$, 
\item[(v)] $R[(y_n,f(y_n)),(y_{n+1},f(y_{n+1}))]\not\subset \inter(W(g,h))$, $n\in\N$.

\end{itemize}
We first show how to construct the sequence $\{x_n\}_n$. 
Begin by setting $x_1=\frac12$. 
Now, assuming we have already constructed $x_1,\dots,x_n$ satisfying (i), (ii) and (iv), we show how to construct $x_{n+1}$. 
By (i) we have $1>x_n>0$. Since $g,f,h$ are strictly decreasing functions and $g(x)<f(x)<h(x)$ holds true,  for every $x\in(0,1)$, it follows that 
\[
0<g^{-1}(f(x_n))<x_n \, \text{ and } \, 0<f^{-1}(h(x_n))<x_n \, . 
\]
Now set $x_{n+1}:=\max\{g^{-1}(f(x_n)),f^{-1}(h(x_n))\}$. Clearly, it holds  $0<x_{n+1}<x_n$ as well as  $R[(x_{n+1},f(x_{n+1})),(x_n,f(x_n))]\subset W(g,h)$ and $R[(x_{n+1},f(x_{n+1})),(x_n,f(x_n))]\not\subset \inter(W(g,h))$. So $x_1,\dots,x_{n+1}$ satisfy (i), (ii) and (iv). Thus we finished the construction of the sequence $\{x_n\}$. The sequence $\{y_n\}_n$ can be constructed similarly; we leave the details to the reader. 

Since the sequences $\{x_n\}$ and $\{y_n\}$ are monotone and bounded, there exists the limits 
\begin{equation}
\label{lim}
x:=\lim_{n\to\infty}x_n \, \text{ and } \, y:=\lim_{n\to\infty}y_n.
\end{equation}
We now show that $x=0$. Assume, towards a contradiction, that $x\neq 0$. Clearly, it holds  
\begin{equation}
\label{cont}
0<x<x_n, \, \text{ for every } \,  n \in \mathbb{N} \, .
\end{equation}
Since $(x,f(x))\in\inter(W(g,h))$, there exists $\delta>0$ such that for every $y,z\in(x-\delta,x+\delta)$ satisfying $y<z$ we have $R[(y,f(y)),(z,f(z))]\subset\inter(W(g,h))$. By (\ref{lim}) it follows that there exists  $n\in\N$ such that $x_{n-1}\in(x-\delta,x+\delta)$. Then (iv) implies that $x_n\leq x-\delta<x$ which contradicts (\ref{cont}). Hence it holds $x=0$. In a similar way, it can be shown that $y=1$. 

Since $f$ is continuous we have 
$$\lim_{n\to\infty}(x_n,f(x_n))=(0,f(0))=(0,1),$$
$$\lim_{n\to\infty}(y_n,f(y_n))=(1,f(1))=(1,0).$$
Since $x_1=y_1$ it follows that 
\[
\sum_{n=1}^{\infty}(x_n-x_{n+1})=\frac12 \quad \text{ and } \quad  \sum_{n=1}^{\infty}(y_{n+1}-y_n)=\frac12
\]
as well as 
\[
\sum_{n=1}^{\infty}(f(x_{n+1})-f(x_n))=1-f\left(1/2\right)  \quad \text{ and } \quad
\sum_{n=1}^{\infty}(f(y_n)-f(y_{n+1}))=f\left(1/2\right)
\]
and therefore we conclude 
\begin{equation}\label{sum}
\sum_{n=1}^{\infty}\bigg((x_n-x_{n+1})+(y_{n+1}-y_n)+(f(x_{n+1})-f(x_n))+(f(y_n)-f(y_{n+1}))\bigg)=2 \, .
\end{equation}
Now Theorem~\ref{folk} implies that for every $n\in\N$ there exist strictly decreasing functions $d_{x,n},d_{y,n}$  that satisfy the following four conditions:
\begin{itemize}

\item[(A)] $d_{x,n}:[x_{n+1},x_n]\to[f(x_n),f(x_{n+1})]$,
\item[(B)] $d_{y,n}:[y_n,y_{n+1}]\to[f(y_{n+1}),f(y_n)]$,
\item[(C)] $\Ha^1(Gr(d_{x,n}))=(x_n-x_{n+1})+(f(x_{n+1})-f(x_n))$,
\item[(D)] $\Ha^1(Gr(d_{y,n}))=(y_{n+1}-y_n)+(f(y_n)-f(y_{n+1}))$.

\end{itemize}
Gluing  those functions together, we obtain desired function $D:(0,1)\to(0,1)$. Indeed, by (A), (B), (ii) and (iii) we have
\begin{equation}\nonumber
\begin{aligned}
Gr(D)&:=\bigcup_{n=1}^{\infty}(Gr(d_{x,n})\cup Gr(d_{y,n}))\\
&\subset\bigcup_{n=1}^{\infty}\bigg(R[(x_{n+1},f(x_{n+1})),(x_n,f(x_n))]\cup R[(y_n,f(y_n)),(y_{n+1},f(y_{n+1}))]\bigg)\\
&\subset W(g,h)
\end{aligned}
\end{equation}
and so $D$ satisfies (a). Using (\ref{sum}), (C) and (D) we conclude that
\[
\Ha^1(Gr(D))=\sum_{n=1}^{\infty}(\Ha^1(Gr(d_{x,n}))+\Ha^1(Gr(d_{y,n})))=2 \,  
\]
and therefore $D$ also satisfies (b). The lemma follows. 
\end{proof}

We are now ready to prove Theorem~\ref{sharpness}. 

\begin{proof}[Proof of Theorem~\ref{sharpness}]

Clearly, there exist continuous and strictly decreasing bijections $f_i:[0,1]\to[0,1]$, $i\in[2k]$, such that 
\begin{equation}\label{usp}
f_1(x)>f_2(x)>\dots>f_{2k}(x), \, \text{ for every } \, x\in (0,1) \, .
\end{equation}
By Lemma~\ref{help} we can find for every $i\in[k]$ strictly decreasing functions $D_i:(0,1)\to(0,1)$ such that
\begin{itemize}

\item[($\alpha$)] $f_{2i}(x)\leq D_i(x)\leq f_{2i-1}(x)$ for every $x\in(0,1)$,
\item[($\beta$)] $\Ha^1(Gr(D_i))=2$.

\end{itemize}
Now consider the set $A:=\bigcup_{i=1}^kGr(D_i)$. Since $D_i$ is a strictly decreasing function, it follows that $Gr(D_i)$ is an antichain for every $i\in[k]$, and therefore $A$ is $k$-antichain. Since $Gr(D_i)\subset(0,1)^2$ for every $i\in[k]$, we have $A\subset(0,1)^2$. By ($\alpha$) and (\ref{usp}) we have $Gr(D_i)\cap Gr(D_j)=\emptyset$ for every $i,j\in[k]$, $i\neq j$. Thus, by ($\beta$) we have 
\[
\Ha^1(A)=\sum_{i=1}^k\Ha^1(Gr(D_i)=2k \, ,
\]
as desired. 
\end{proof}

\section{Concluding remarks}

As mentioned in the introduction, there are several ways to consider Theorem~\ref{erdos_thm} in a continuous setting, and an alternative direction has been considered in~\cite{MPV}. It is shown in~\cite{MPV} that given $s\in[0,1]$ and  $\beta\ge 0$ there exists a set $A\subset [0,1]^n$ that satisfies 
$\dim_H(A)=n-1+s$ and $\Ha^s(A\cap C) \le \beta$, for all chains $C\subset [0,1]^n$. Here, $\dim_H(\cdot)$ denotes Hausdorff dimension (see~\cite[p.~86]{Evans_Gariepy}). Given this result, 
the following problem arises naturally. 

\begin{problem}[Mitsis et al.~\cite{MPV}]
\label{prbl_mpv}
Fix $s\in [0,1]$ and $\beta\ge 0$.  
Let  $A\subset [0,1]^n$ be a measurable set such that $\dim_H(A) = n-1 + s$ and 
$\Ha^s(A\cap C) \le \beta$,  for all chains $C\subset [0,1]^n$. 
What is a sharp upper bound on $\Ha^{n-1+s}(A)$?
\end{problem}

The case $s=0, \beta=1$ has been considered in~\cite{EMPR}. The case $s=0, \beta\in\N$ has been the content of the present article. The case $s=1, \beta\in (0,n]$ has been considered in~\cite{MPV}. The problem remains open for all other values of the parameters $s,\beta$.

\end{document}